\documentclass{amsart}
\usepackage[sorted-cites]{amsrefs}
\usepackage{graphicx}
\usepackage{latexsym}
\usepackage{amsthm}

\usepackage{amsfonts}
\usepackage{xcolor}
\usepackage[all]{xy}

\usepackage{amssymb, mathrsfs, amsfonts, amsmath}
\usepackage{amsbsy}
\usepackage{amsfonts}
\setlength{\topmargin}{0.0in}
\setlength{\textheight}{8.5in} \setlength{\textwidth}{5.5in}

\newtheorem{corollary}{Corollary}

\newtheorem{lemma}{Lemma}

\newtheorem{remark}{Remark}
\newtheorem{theorem}{Theorem}
\newtheorem{example}{Example}
\numberwithin{equation}{section}

\global\long\def\Vol{\mathrm{Vol}}

\begin{document}
\title[Volume growth estimates]{Volume growth estimates for Ricci solitons\\ and quasi-Einstein manifolds}

\author{Xu Cheng}
\author{Ernani Ribeiro Jr}
\author{Detang Zhou}

\address[X. Cheng]{Instituto de Matem\'atica e Estat\'istica, Universidade Federal Fluminense - UFF, 24020-140, Niter\'oi - RJ, Brazil}
\email{xucheng@id.uff.br}

\address[E. Ribeiro Jr]{Departamento  de Matem\'atica, Universidade Federal do Cear\'a - UFC, Campus do Pici, Av. Humberto Monte, Bloco 914,
60455-760, Fortaleza - CE, Brazil}\email{ernani@mat.ufc.br}

\address[D. Zhou]{Instituto de Matem\'atica e Estat\'istica, Universidade Federal Fluminense - UFF, 24020-140, Niter\'oi - RJ, Brazil}
\email{zhou@impa.br}

\thanks{X. Cheng and D. Zhou were partially supported by CNPq/Brazil and FAPERJ/ Brazil - Finance Code 001}

\thanks{E. Ribeiro was partially supported by CNPq/Brazil [Grant: 305410/2018-0 \& 160002/2019-2] and CAPES/ Brazil - Finance Code 001}

\keywords{gradient Ricci solitons; quasi-Einstein manifolds; volume growth estimate}

\subjclass[2010]{Primary 53C20, 53C25; Secondary 53C65.}

\date{\today}

\newcommand{\spacing}[1]{\renewcommand{\baselinestretch}{#1}\large\normalsize}
\spacing{1.2}

\begin{abstract}

In this article, we provide some volume growth estimates for complete noncompact gradient Ricci solitons and quasi-Einstein manifolds similar to the classical results by Bishop, Calabi and Yau for complete Riemannian manifolds with nonnegative Ricci curvature. We prove a sharp volume growth estimate for complete noncompact gradient shrinking Ricci soliton. Moreover, we provide upper bound volume growth estimates for complete noncompact quasi-Einstein manifolds with $\lambda=0.$ In addition, we prove that geodesic balls of complete noncompact quasi-Einstein manifolds with $\lambda<0$ and $\mu\leq 0$ have at most exponential volume growth. 
\end{abstract}

\maketitle
\section{Introduction}

In this article, we deal with two objects: gradient shrinking Ricci solitons and quasi-Einstein manifolds.  The first part is on the volume growth of gradient shrinking Ricci solitons. Recall that a  complete Riemannian metric $g$ on a smooth $n$-dimensional manifold $M^n$ is called {\it gradient shrinking Ricci soliton} if there exists a smooth potential function $f$ on $M^n$ such that the Ricci tensor $Ric$ of the metric $g$ satisfies the following equation
\begin{equation}
\label{maineq-0}
Ric+\nabla^2 f=\lambda g,
\end{equation} for some positive  constant $\lambda.$ Here, $\nabla^2\,f$ denotes the Hessian of $f.$
Without loss of generality, we  take  $\lambda=\frac{1}{2}$ in (\ref{maineq-0}), that is, $g$ satisfies
\begin{equation}
\label{maineq}
Ric+\nabla^2f=\frac{1}{2} g.
\end{equation} 
The potential function  $f$ can be normalized, by adding a suitable constant
to it,  to satisfy the following equation
\begin{equation}\label{eq-f}
R+|\nabla f|^{2}=f,
\end{equation} where $R$ is the scalar curvature of $M^n.$

Gradient Ricci solitons are important in understanding the Hamilton's Ricci flow \cite{Hamilton2}. They arise often as singularity models of the Ricci flow. It has been confirmed by Enders, M\"uller and Topping \cite{Topping} that the blow-ups around a type-I singularity point of a Ricci flow  converge to (nontrivial) gradient shrinking Ricci solitons, see also \cite{Naber,Sesum}. In view of their importance, it is  natural to seek classification results for gradient shrinking Ricci solitons. We refer the readers to the survey \cite{caoALM11} and references therein for a nice overview on the subject.  

On the other hand, a good knowledge of the volume growth rate is one of basic  geometric informations on which various other properties of the underlying Riemannian manifold are built. We now briefly recall a few relevant results on volume growth estimate. A theorem due to Calabi \cite{calabi} and Yau  \cite{yau1} asserts that the geodesic balls of complete noncompact manifolds with nonnegative Ricci tensor have at least linear volume growth, that is, $$\Vol (B_{p}(r))\geq cr,$$ for any $r>r_{0},$ where $r_{0}$ is a positive constant and $B_{p}(r)$ is the geodesic ball of radius $r$ centered at $p\in M^n$ and $c$ is a constant that does not depend on $r.$ Similar results were obtained by Munteanu and Sesum \cite{Natasa} on gradient shrinking Ricci solitons.

The classical Bishop volume comparison theorem asserts that  for a complete noncompact $n$-dimensional Riemannian manifold with nonnegative Ricci tensor,  the volume of the geodesic balls of radius $r$  are no more than the one of the balls of the radius $r$ in the Euclidean space $\mathbb{R}^n$ and hence it must have at most polynomial volume growth. In \cite{Cao}, Cao and Zhou proved that gradient shrinking Ricci solitons have at most Euclidean volume growth, which is an analog of Bishop's theorem for gradient shrinking Ricci solitons (for steady case, see \cite{Natasa}). More precisely, they showed that  for a gradient shrinking Ricci soliton,  its  geodesic balls of radius $r$ centered at a fixed point $p$ have the volume
\begin{equation}
\label{eq1v}
\Vol(B_{p}(r))\le cr^{n},
\end{equation} for some positive constant $c$ and $r>0$ sufficiently large, where  the constant $c$  depends on the geometry of the unit ball at $p$. The Gaussian shrinking soliton $\Big(\Bbb{R}^{n},\,\delta_{ij},\,f(x)=\frac{|x|^{2}}{4}\Big)$ guarantees that the Euclidean growth rate proved by Cao and Zhou is optimal. In this article, we will prove sharp volume growth upper bounds for complete noncompact gradient shrinking Ricci solitons in the sense that they can be achieved by the Gaussian shrinking soliton.  Before stating the results, let us give some notation. Fix a point $p$ on $M^n$. In terms of polar normal coordinates at $p,$ we may write the volume element as $J^{n-1}(\theta,r)dr\wedge d\theta,$ where $d\theta$ is the area element of the unit $(n-1)$-dimensional sphere $\Bbb{S}^{n-1}.$ In particular, the Gauss lemma asserts that the area element of the boundary of the geodesic ball of radius $r$ is given by $J^{n-1}(\theta,r)d\theta$.

Now, we are ready to state our first result.

\begin{theorem}\label{thm-1}
 Let $(M^{n},\,g,\,f)$ be an $n$-dimensional complete noncompact gradient shrinking Ricci soliton with $f$ satisfying (\ref{eq-f}). Let $p\in M$ be a fixed point. Then for all $r>0,$ the volume of the geodesic ball $B_p(r)$ of radius $r$ centered at $p$ satisfies
\begin{equation}
\label{ine-vol1-c}
\Vol(B_{p}(r))\le \int_{\Bbb{S}^{n-1}}\int_{0}^{r}e^{f(p)-\frac{1}{r}\int_{0}^{r}R(\theta,s)ds}r^{n-1}drd\theta.
\end{equation}  Furthermore, the equality in (\ref{ine-vol1-c}) holds for all $r$ if and only if $(M^{n},\,g,\,f)$ is  $(\mathbb{R}^{n},\,\delta_{ij},\,\frac{|x-p|^{2}}{4})$, i.e., a Gaussian shrinking soliton $(\mathbb{R}^{n},\,\delta_{ij},\,\frac{|x|^{2}}{4})$ up to a translation.
\end{theorem}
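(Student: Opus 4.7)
The plan is to prove a pointwise upper bound on $J^{n-1}(\theta,r)$ along each radial geodesic from $p$ and then integrate in polar normal coordinates. Fix $\theta\in\mathbb{S}^{n-1}$, let $\gamma_\theta$ be the unit-speed geodesic with $\gamma_\theta(0)=p$, and recall the identity $\partial_r\log J^{n-1}(\theta,r)=\Delta r|_{\gamma_\theta(r)}$. Since the cut locus has measure zero, the estimate (\ref{ine-vol1-c}) will follow at once from the pointwise bound
\[
J^{n-1}(\theta,r)\;\le\; r^{n-1}\exp\!\Big(f(p)-\tfrac{1}{r}\!\int_0^r R(\gamma_\theta(s))\,ds\Big),
\]
valid for every $\theta$ and every $r$ before the cut time in direction $\theta$.

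Taking logarithms and multiplying through by $r$, this is equivalent to showing $\Phi(r)\le 0$ for all $r\ge 0$, where
\[
\Phi(r)\;:=\;r\log\!\Big(\tfrac{J^{n-1}(\theta,r)}{r^{n-1}}\Big)+\int_0^r R(\gamma_\theta(s))\,ds-rf(p).
\]
Using $J^{n-1}/r^{n-1}\to 1$ as $r\to 0$, we have $\Phi(0)=0$, so it suffices to prove $\Phi'(r)\le 0$. A direct computation gives
\[
\Phi'(r)=\log\tfrac{J^{n-1}(\theta,r)}{r^{n-1}}+r\Delta r-(n-1)+R(\gamma_\theta(r))-f(p),
\]
and at $r=0$ the first three terms cancel, so that $\Phi'(0)=R(p)-f(p)=-|\nabla f(p)|^2$ by (\ref{eq-f}).

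The computational core is to differentiate once more and apply two things: the Bochner--Bishop inequality $(\Delta r)'+\tfrac{(\Delta r)^2}{n-1}\le -\mathrm{Ric}(\dot\gamma,\dot\gamma)$ and the soliton identity $\mathrm{Ric}(\dot\gamma,\dot\gamma)=\tfrac12-(f\circ\gamma)''(r)$. Writing $\alpha:=r\Delta r-(n-1)$, a short algebraic check shows
\[
2\Delta r-\tfrac{n-1}{r}-\tfrac{r(\Delta r)^2}{n-1}=-\tfrac{\alpha^2}{r(n-1)},
\]
which is the identity that makes the whole method work. The combination yields
\[
\Phi''(r)\;\le\;-\tfrac{(r\Delta r-(n-1))^2}{r(n-1)}-\tfrac{r}{2}+r(f\circ\gamma)''(r)+\partial_r R(\gamma_\theta(r)).
\]
Integrating from $0$ to $r$ (the first integrand is $O(t^3)$ near $0$, so integrable), integrating $\int_0^r t(f\circ\gamma)''(t)\,dt$ by parts, and rewriting $R(\gamma_\theta(r))=f(\gamma_\theta(r))-|\nabla f|^2(\gamma_\theta(r))$ via (\ref{eq-f}), the constant terms $f(p)$, $R(p)$, and $f(\gamma_\theta(r))$ should telescope cleanly against $\Phi'(0)=R(p)-f(p)$. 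Finally, using $|\nabla f|^2\ge((f\circ\gamma)'(r))^2$ I complete the square in $(f\circ\gamma)'(r)-r/2$ to arrive at
\[
\Phi'(r)\;\le\;-\Big((f\circ\gamma)'(r)-\tfrac{r}{2}\Big)^{2}-\int_0^r\tfrac{(t\Delta t-(n-1))^2}{t(n-1)}\,dt\;\le\;0.
\]
The delicate bookkeeping is precisely here: making sure no stray sign or constant survives the telescoping collapse. Once $\Phi'\le 0$ and $\Phi(0)=0$, one concludes $\Phi\le 0$ and hence the pointwise bound.

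For the rigidity statement, $\Phi\equiv 0$ forces every nonnegative contribution in the chain to vanish identically on every geodesic from $p$: namely $r\Delta r\equiv n-1$, $(f\circ\gamma)'(r)\equiv r/2$, equality in $|\nabla f|^2\ge((f\circ\gamma)')^2$ (so $\nabla f$ is radial), and equality in Bochner--Bishop (so $\nabla^2 r=\tfrac{1}{r}(g-dr\otimes dr)$ on the orthogonal complement of $\partial_r$). Imposed on every geodesic from $p$ and combined with completeness, these force $(M,g)\cong(\mathbb{R}^n,\delta_{ij})$; integrating $(f\circ\gamma)'=r/2$ from $p$ (where necessarily $\nabla f(p)=0$) gives $f=\tfrac14 d(p,\cdot)^2+f(p)$; and substituting $R\equiv 0$ into (\ref{eq-f}) forces $f(p)=0$, so $(M,g,f)$ is the Gaussian shrinker centered at $p$, as claimed.
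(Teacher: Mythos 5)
Your derivation of the pointwise bound is sound and, after unwinding, is essentially the paper's computation in a different order: your function $\Phi$ is exactly $(n-1)r\log\frac{J}{r}+\int_0^r R\,ds-rf(p)$, your identity $2\Delta r-\frac{n-1}{r}-\frac{r(\Delta r)^2}{n-1}=-\frac{(r\Delta r-(n-1))^2}{r(n-1)}$ plays the role of the paper's discarded $\int_0^r(sw-1)^2ds$ after it multiplies the Riccati inequality by $s^2$ and integrates twice, and your final completion of the square in $(f\circ\gamma)'-\frac r2$ together with $|\nabla f|^2\ge\langle\nabla f,\nabla r\rangle^2$ and $f=R+|\nabla f|^2$ is precisely the paper's key step (its inequality $(n-1)\big(r\log\frac Jr\big)'\le -R+f(p)$ is your $\Phi'\le 0$); I checked that the telescoping you flag as delicate does close up, with the harmless extra term $-|\nabla f(p)|^2$ appearing from $\Phi'(0)$. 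Where you genuinely diverge is the rigidity. The paper uses only the vanishing of the two squares (so $\nabla f=\frac r2\partial_r$, $f=f(p)+\frac{r^2}4$, hence $R\equiv f(p)$ and $\Vol(B_p(r))=\omega_nr^n$) and then invokes the external estimate $\Vol(B_p(r))\le cr^{\,n-2\inf_MR}$ of Cheng--Zhou and Zhang to force $\inf_MR=0$, whence $R\equiv 0$ and flatness. You instead extract more from equality -- $r\Delta r\equiv n-1$ and equality in the Bochner/Cauchy--Schwarz step, giving $\nabla^2r=\frac1r(g-dr\otimes dr)$ -- and conclude flatness directly by the standard Hessian-comparison rigidity, which avoids that citation and is arguably more self-contained. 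This works, but two steps you state without proof need to be written out: (i) equality in \eqref{ine-vol1-c} for all $r$ forces the cut locus to be empty (otherwise the strictly positive integrand makes the last inequality in \eqref{eq-10} strict), so polar coordinates are global and the Hessian condition really determines $g=dr^2+r^2g_{\mathbb S^{n-1}}$; and (ii) equality only gives the intermediate identities a.e. at first, so one must propagate to pointwise equality (in particular in the Bochner step) by continuity. Both are routine, and the remaining bookkeeping ($f=\frac14d(p,\cdot)^2+f(p)$, then $f(p)=0$ from \eqref{eq-f} with $R\equiv0$) is correct.
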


We remark that the volume growth upper bound obtained in (\ref{ine-vol1})  does not depend of the geometry of the unit ball. Moreover,  the  estimate holds for all $r>0$ not necessarily (sufficiently) large. 
 As a consequence of Theorem \ref{thm-1} we have the following result.

\begin{corollary}\label{cor-1}
 Let $(M^{n},\,g,\,f)$ be a complete noncompact
gradient shrinking Ricci soliton with $f$ satisfying (\ref{eq-f}). Let $p\in M$ be a fixed point.
Then for all $r>0$,  the volume of the geodesic ball $B_p(r)$ of radius $r$ centered at $p$ satisfies
\begin{equation}\label{ine-vol2}
\Vol(B_{p}(r))\le e^{f(p)-\inf_MR}\omega_nr^n
\end{equation}
and 
\begin{equation}\label{ine-vol2-c}
\Vol(B_{p}(r))\le e^{f(p)}\omega_nr^n,
\end{equation}
where  $\omega_n$ denotes the volume of the unit Euclidean ball. Moreover, the equality in (\ref{ine-vol2}) holds for all $r$  if and only if $(M^{n},\,g,\,f)$
is $(\mathbb{R}^{n},\,\delta_{ij},\,\frac{|x-p|^{2}}{4})$, i.e., a Gaussian shrinking soliton $(\mathbb{R}^{n},\,\delta_{ij},\,\frac{|x|^{2}}{4})$ up to  a translation. 
\end{corollary}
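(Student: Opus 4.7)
The plan is to deduce both upper bounds directly from the sharp estimate (\ref{ine-vol1-c}) of Theorem~\ref{thm-1} by an elementary pointwise bound on the exponent, and then to extract the rigidity statement from the corresponding rigidity already established in Theorem~\ref{thm-1}.

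For (\ref{ine-vol2}), I would observe that since $R(\theta,s)\ge \inf_M R$ pointwise, the radial average $\frac{1}{t}\int_0^t R(\theta,s)\,ds$ is likewise bounded below by $\inf_M R$, so the integrand in (\ref{ine-vol1-c}) is dominated by $e^{f(p)-\inf_M R}$. Pulling this constant through the integral and computing
\[
\int_{\Bbb{S}^{n-1}}\int_0^r t^{n-1}\,dt\,d\theta=|\Bbb{S}^{n-1}|\cdot\frac{r^n}{n}=\omega_n r^n
\]
yields (\ref{ine-vol2}) at once. The second bound (\ref{ine-vol2-c}) then follows by invoking Chen's theorem that the scalar curvature of any complete gradient shrinking Ricci soliton is nonnegative, whence $\inf_M R\ge 0$ and the factor $e^{-\inf_M R}\le 1$ can simply be dropped. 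This is the single external ingredient needed.

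For the rigidity part, I would use the chain
\[
\Vol(B_p(r))\le\int_{\Bbb{S}^{n-1}}\int_0^r e^{f(p)-\frac{1}{t}\int_0^t R(\theta,s)\,ds}t^{n-1}\,dt\,d\theta\le e^{f(p)-\inf_M R}\omega_n r^n.
\]
If equality holds in (\ref{ine-vol2}) for every $r>0$, then equality must also hold in the first step, i.e., in (\ref{ine-vol1-c}), and Theorem~\ref{thm-1}'s rigidity statement then identifies $(M,g,f)$ with a translated Gaussian shrinking soliton. Since the corollary uses the same normalization (\ref{eq-f}) of $f$ as Theorem~\ref{thm-1}, no further adjustment of $f(p)$ is needed. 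There is no substantive obstacle; the only delicate bookkeeping is checking that equality in the weaker, Corollary-level bound propagates back to equality in the stronger Theorem-level bound, which is precisely what permits the rigidity of Theorem~\ref{thm-1} to be imported verbatim.
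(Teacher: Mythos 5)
Your proposal is correct and follows essentially the same route as the paper: both deduce \eqref{ine-vol2} and \eqref{ine-vol2-c} from the sharp estimate \eqref{ine-vol1-c} by bounding the exponent via $\inf_M R$ (using Chen's nonnegativity of the scalar curvature to drop that term in \eqref{ine-vol2-c}), and both obtain the rigidity statement by observing that equality in \eqref{ine-vol2} forces equality in \eqref{ine-vol1-c}, so the rigidity of Theorem~\ref{thm-1} applies directly.
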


A relevant observation is that the inequalities obtained in Theorem \ref{thm-1} and Corollary \ref{cor-1}  are optimal in the sense that the equalities  hold for  the Gaussian shrinking soliton $(\mathbb{R}^{n},\,\delta_{ij},\,\frac{|x|^{2}}{4})$ up to a suitable  translation and conversely, these equalities imply that the Ricci soliton must be the Gaussian shrinking soliton $(\mathbb{R}^{n},\,\delta_{ij},\,\frac{|x|^{2}}{4})$ up to a translation.

We also remark that the inequalities obtained in Theorem \ref{thm-1} and Corollary \ref{cor-1} may be  generalized to the case of complete smooth metric measure space $(M^n,\, g,\, e^{-f}dv)$ satisfying $Ric_f\geq \frac12g$ and $|\nabla f|^2\leq f$, where $Ric_f=Ric+\nabla^2f$ stands for the Bakry-\'Emery curvature tensor; for more details, see Theorem \ref{thm-1-f} in Section \ref{soliton}. In this context, Munteanu and Wang \cite[Theorem 1.4]{MW2} showed that if an $n$-dimensional complete smooth metric measure space $(M^{n},\,g,\,e^{-f}dv)$ satisfies $Ric_f\geq \frac12g$ and $|\nabla f|^2\leq f,$ then $(M^{n},\,g)$ has Euclidean volume growth, i.e., $\text{Vol}(B_p(r))\leq c(n)e^{f(p)}r^n,$ for all $r>0$ and $p\in M.$ In particular, the constant $c(n)$ depends only on $n.$ Besides, in the special case of gradient shrinking Ricci solitons, if $p$ is chosen to be a minimum point of $f,$ then it is easy to check that $f(p)\leq \frac{n}{2}$ and hence $\text{Vol}(B_p(r))\leq c_0r^n$ for $r>0$,  where the constant $c_0=c(n)e^{\frac{n}{2}}$ depends only on $n$. 
This special case was observed by Haslhofer and M\"uller \cite{HM} by proving the existence of a constant $c_0$ depending only on $n$. 

It should be emphasized that, in \eqref{ine-vol2-c} of Corollary \ref{cor-1} and \eqref{ine-vol1-cc} of Theorem \ref{thm-1-f}, we obtain the explicit value of the constant $c(n)$,  i.e., $c(n)=\omega_n$. Hence, in the case of gradient shrinking Ricci solitons, if $p$ is a minimum point of $f,$ then it follows from \eqref{ine-vol2-c} that  $\Vol(B_{p}(r))\le c_0r^n$ for all $r>0$, where the constant $c_0=\omega_n e^{\frac{n}{2}}$ depends only on $n$.

\vspace{0.25cm}

In the second part of the article we discuss volume growth estimates for quasi-Einstein manifolds. Recall that, according to \cite{CaseShuWey}, a complete Riemannian manifold $(M^n,\,g),$ $n\geq 2,$ is an $m$-{\it quasi-Einstein manifold}, or simply {\it quasi-Einstein manifold}, if there exists a smooth potential function $f$ on $M^n$ satisfying the following fundamental equation
\begin{equation}
\label{eqqem}
Ric_{f}^{m}=Ric+\nabla ^2f-\frac{1}{m}df\otimes df=\lambda g,
\end{equation} for some constants $\lambda$ and $m\neq 0.$ It is known that, on a quasi-Einstein manifold, there is a constant  $\mu$ such that 
\begin{equation}\label{2eq}
\Delta_{f} f = m\lambda-m\mu e^{\frac{2}{m}f},
\end{equation} where $\Delta_{f}=\Delta -\langle\nabla f,\,\cdot\,\,\rangle$ is the $f$-Laplacian. For more details on (\ref{2eq}), we refer the readers to \cite{KK}.

We say that a quasi-Einstein manifold is \emph{trivial} if its potential function $f$ is constant, otherwise, we say that it is \emph{nontrivial}. Hence, the triviality implies that $M^n$ is an Einstein manifold. An $\infty$-quasi-Einstein manifold is a gradient Ricci soliton. We also remark that $1$-quasi-Einstein manifolds are more commonly called {\it static metrics} and such metrics have connections to the prescribed scalar curvature pro\-blem, the positive mass theorem and general relativity. As discussed by Besse \cite[pg. 265]{Besse}, an $m$-quasi-Einstein manifold corresponds to a base of a warped product Einstein metric; for more details see Corollary 9.107 in \cite[pg. 267]{Besse} (see also Theorem 1 in \cite{Ernani2}). Another interesting motivation comes from the study of diffusion operators by Bakry and \'Emery \cite{BE}.

Nontrivial examples of quasi-Einstein manifolds can be found, for instance, in \cite{Besse,HPW,LuePage,Wang}. It is also important to highlight that Case \cite{Case} showed that complete $m$-quasi-Einstein manifolds with $\lambda=0$ and $\mu\leq0$ are trivial. While Qian \cite{Qian} proved that complete $m$-quasi-Einstein manifolds with $\lambda>0$ must be compact. Moreover, by Kim and Kim \cite{KK} nontrivial compact quasi-Einstein manifolds must have $\lambda>0.$ Therefore, it follows that a complete  nontrivial quasi-Einstein manifold is compact if and only if $\lambda>0$ (see also \cite[Theorem 4.1]{HPW}). In this article, we focus on complete noncompact quasi-Einstein manifolds.  Consequently, $\lambda$ must be nonpositive. 

In what follows, we recall some examples; see \cite[Table 2]{HPW}.

\begin{example}
\label{example2} Let $\Bbb{H}^n$ be the hyperbolic space form of constant sectional curvature $-1$ with metric $g_{\Bbb{H}^n}=dt^{2}+\sinh^{2}tg_{\Bbb{S}^{n-1}}$ and potential function $f(t)=C\log (\cosh t),$ where $C$ is a constant. Thus, it is a noncompact quasi-Einstein manifold with $\lambda<0$ and $\mu<0.$
\end{example}

\begin{example}
\label{example2a} Let $M=[0,\,\infty)\times F$ with metric $g=dt^{2}+g_{F},$ where $g_{F}$ is a Ricci flat metric, and potential function $f=-m \log c\,t,$ where $c$ is an arbitrary positive constant. So, $M$ is a noncompact quasi-Einstein manifold with $\lambda=0$ and $\mu>0.$
\end{example} 

Another interesting example of noncompact quasi-Einstein manifold with $\lambda=0$ is the Generalized Schwarzschild metric; for more details, we refer the reader to \cite[Example 9.118(a)]{Besse} or \cite[Example 2]{HPW}.

In the sequel, by adapting the method used in the proof of Theorem \ref{thm-1}, we provide an upper bound volume growth estimate for complete quasi-Einstein manifolds with $\lambda=0.$ To be precise, we have established the following theorem.

\begin{theorem}
\label{thmqEmNew}
Let $(M^{n},\,g,\,f)$ be a complete noncompact $m$-quasi-Einstein manifold with $\lambda=0.$ Then for all $r>0,$ the volume of the geodesic ball $B_{p}(r)$ of radius $r$ centered at a point $p$ satisfies

\begin{equation}
\Vol(B_{p}(r))\le \int_{\Bbb{S}^{n-1}}\int_{0}^{r}e^{\Phi}r^{n-1}dr d\theta,
\end{equation} where $\Phi=f(\theta, r)+f(p)-\frac{2}{r}\int_{0}^{r}f(\theta, s)ds$.

\end{theorem}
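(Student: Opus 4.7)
The plan is to adapt the Jacobian ODE argument used to prove Theorem~\ref{thm-1} to the quasi-Einstein setting. Fix a unit vector $\theta \in \Bbb{S}^{n-1}$ and consider the geodesic $\gamma_\theta(s) = \exp_p(s\theta)$. For $s$ less than the cut value in the direction $\theta$, write $H(s) = \Delta r(\gamma_\theta(s))$ for the mean curvature of the geodesic sphere at $\gamma_\theta(s)$; along $\gamma_\theta$ this coincides with $(\log J^{n-1})'(\theta, s)$. The standard Riccati inequality (trace of the Jacobi-field Riccati equation combined with Cauchy--Schwarz) reads
\begin{equation*}
H'(s) + \frac{H(s)^2}{n-1} \leq -Ric(\gamma_\theta'(s), \gamma_\theta'(s)).
\end{equation*}

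Evaluating the quasi-Einstein equation \eqref{eqqem} with $\lambda=0$ along $\gamma_\theta$ gives $Ric(\gamma_\theta', \gamma_\theta') = (f'(s))^2/m - f''(s)$, so (for $m>0$) one obtains $-Ric(\gamma_\theta', \gamma_\theta') \leq f''(s)$. Combining this with the pointwise estimate $H^2/(n-1) \geq 2H/s - (n-1)/s^2$, which is merely a rewriting of $(H - (n-1)/s)^2 \geq 0$, multiplying the resulting bound by $s^2$, and integrating from $0$ to $r$ yields
\begin{equation*}
r^2 H(r) \leq (n-1)\,r + \int_0^r s^2 f''(\theta, s)\, ds,
\end{equation*}
once one verifies $s^2 H(s) \to 0$ as $s \to 0^+$. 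Two integrations by parts simplify the integral to $r^2 f'(r) - 2 r f(\theta, r) + 2\int_0^r f(\theta, s)\, ds$, and dividing by $r^2$ gives
\begin{equation*}
H(r) - \frac{n-1}{r} \leq f'(r) - \frac{2 f(\theta, r)}{r} + \frac{2}{r^2} \int_0^r f(\theta, s)\, ds,
\end{equation*}
whose right-hand side is exactly $\Phi'(\theta, r)$ for $\Phi$ as in the statement.

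Since the left-hand side equals $\big(\log(J^{n-1}/r^{n-1})\big)'(\theta, r)$, a further integration in $r$ from $0$ to $r$ produces the pointwise Jacobian bound
\begin{equation*}
J^{n-1}(\theta, r) \leq r^{n-1} e^{\Phi(\theta, r)}
\end{equation*}
within the interior of the cut locus, and the inequality extends trivially to $\Bbb{S}^{n-1}\times[0,r]$ since $J^{n-1}$ vanishes past the cut value. Integrating in polar coordinates gives the stated upper bound on $\Vol(B_p(r))$. I expect the one point requiring real care is that both $\Phi(\theta, r)$ and $\log(J^{n-1}(\theta, r)/r^{n-1})$ vanish as $r \to 0^+$ even though their derivatives contain singular $1/r$ contributions; this cancellation follows from the Taylor expansions $f(\theta, s) = f(p) + O(s)$ and $J(\theta, s) = s + O(s^3)$, and is the technical step that licenses the integration with no boundary term at $s = 0$. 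A brief side remark may be needed to address the sign convention on $m$, since the inequality $-Ric(\gamma_\theta',\gamma_\theta') \leq f''(s)$ uses $(f')^2/m \geq 0$.
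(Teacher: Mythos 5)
Your argument is correct and is essentially the paper's proof: the same Riccati comparison in geodesic polar coordinates, the same substitution of the quasi-Einstein equation for $Ric\big(\tfrac{\partial}{\partial r},\tfrac{\partial}{\partial r}\big)$ with the nonnegative term $\tfrac{1}{m}(f')^{2}$ discarded (so, like the paper, you implicitly use $m>0$), followed by the same integrations by parts, vanishing boundary terms at $r=0$, and cut-locus handling. The only difference is bookkeeping: you identify the right-hand side as $\Phi'$ before the final integration, whereas the paper integrates the explicit terms directly.
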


Our next result is an upper bound weighted volume growth for geodesic balls of quasi-Einstein manifolds with $\lambda=0.$ Here, the weighted measure is given by $ dV_{f}=e^{-f}dV$ and $dV$ denotes the Riemannian measure of $(M^{n},\,g).$ More precisely, we get the following result.

\begin{theorem}\label{thmA}
Let $\big(M^{n},\,g,\,f\big)$ be a complete noncompact quasi-Einstein manifold with $\lambda=0.$ Then there exist positive constants $b$ and $c$ so that
 $$\Vol_{f}(B_p(r))\leq be^{c\,r},$$ for any $r>0$ sufficiently large. 
 \end{theorem}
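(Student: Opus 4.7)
The plan is to work in geodesic polar coordinates $(\theta,r)$ at $p$, in which the weighted volume density is $e^{-f(\theta,r)}J^{n-1}(\theta,r)$ and satisfies
\[\frac{\partial}{\partial r}\log\bigl(e^{-f}J^{n-1}\bigr)\;=\;\Delta r-\partial_{r}f\;=\;\Delta_{f}r.\]
Consequently, any upper bound of the form $\Delta_{f}r\le c$ holding on every radial geodesic beyond a fixed small radius $r_{0}>0$ will, after integrating in $r$ from $r_{0}$ and then in $\theta\in\mathbb{S}^{n-1}$, yield the desired estimate $\Vol_{f}(B_{p}(r))\le b\,e^{cr}$. So the task reduces to bounding $\Delta_{f}r$ above by a constant independent of $\theta$ and $r$.

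To obtain such a bound, I would combine the quasi-Einstein equation with the classical Bochner/Riccati inequality. Taking the $(\partial_{r},\partial_{r})$-component of \eqref{eqqem} with $\lambda=0$ along a unit-speed minimizing geodesic from $p$ gives
\[Ric(\partial_{r},\partial_{r})+f''(r)-\tfrac{1}{m}(f'(r))^{2}=0,\]
while Bochner's formula applied to the distance function yields, away from the cut locus,
\[(\Delta r)'(r)\;\le\;-\frac{(\Delta r)^{2}}{n-1}-Ric(\partial_{r},\partial_{r}).\]
Substituting the first identity into the second and rearranging so that $f''$ joins $(\Delta r)'$ on the left,
\[(\Delta_{f}r)'(r)\;=\;(\Delta r-f')'(r)\;\le\;-\frac{(\Delta r)^{2}}{n-1}-\frac{(f'(r))^{2}}{m}\;\le\;0\]
(assuming $m>0$). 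Hence $\Delta_{f}r$ is non-increasing in $r$ along every radial geodesic, so for any fixed $r_{0}>0$ we get $\Delta_{f}r(\theta,r)\le\Delta_{f}r(\theta,r_{0})$ for all $r\ge r_{0}$ where the geodesic is still minimizing. By continuity, $\Delta_{f}r(\cdot,r_{0})$ is bounded on the compact sphere $\mathbb{S}^{n-1}$ by some constant $c$ depending only on the geometry in a neighborhood of $p$.

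Integrating $\partial_{r}\log(e^{-f}J^{n-1})\le c$ from $r_{0}$ to $r$ then yields
\[e^{-f(\theta,r)}J^{n-1}(\theta,r)\;\le\;e^{-f(\theta,r_{0})}J^{n-1}(\theta,r_{0})\,e^{c(r-r_{0})}\;\le\;C_{1}\,e^{cr},\]
and integrating over $\theta\in\mathbb{S}^{n-1}$ and $s\in(r_{0},r)$ gives $\Vol_{f}(B_{p}(r)\setminus B_{p}(r_{0}))\le b'e^{cr}$; absorbing the finite quantity $\Vol_{f}(B_{p}(r_{0}))$ into the constant yields the theorem. The principal technical obstacle is accommodating the cut locus of $p$, where the pointwise Riccati inequality fails; the standard remedy is a Calabi-type smoothing argument (or a distributional upper-Laplacian estimate) to justify the monotonicity of $\Delta_{f}r$ globally, or equivalently to restrict the polar integration to the star-shaped domain on which $\exp_{p}$ is injective, since the cut locus is of measure zero and does not affect the weighted volume. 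A secondary subtlety is the case $m<0$, in which the term $-(f')^{2}/m$ is no longer nonpositive and pointwise monotonicity is lost; an integrated Gr\"onwall argument would then be needed in its place.
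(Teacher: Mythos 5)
Your proof is correct and essentially identical to the paper's: your quantity $\Delta_f r=(n-1)J'/J-f'$ is exactly the logarithmic derivative $m_f'/m_f$ of the weighted volume density used there, and both arguments derive its monotonicity in $r$ by inserting the radial quasi-Einstein identity $Ric(\partial_r,\partial_r)=-f''+\tfrac1m(f')^2$ into the Riccati inequality (implicitly assuming $m>0$, as the paper also does) and then integrate twice. Your explicit handling of the cut locus and of uniformity in $\theta$ is extra care on the same route, not a different argument.
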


This result implies in particular that every complete noncompact quasi-Einstein manifold with $\lambda=0$ is $f$-stochastically complete. Here, $f$-stochastic completeness stands for the stochastic completeness of the diffusion process associated to the $f$-Laplacian operator $\Delta_{f}.$ For more details on stochastically completeness see, for instance, \cite{Grigo}. Another consequence of Theorem \ref{thmA} is an upper bound estimate for the first eigenvalue $\lambda_{1}(\Delta_{f})$ of the $f$-Laplacian $\Delta_{f}$. To be precise, we have the following corollary.

\begin{corollary}
\label{cor1}
Let $\big(M^{n},\,g,\,f\big)$ be a complete noncompact quasi-Einstein manifold with $\lambda=0.$ Then, it holds that $$\lambda_{1}(\Delta_{f})\leq \frac{c^{2}}{4},$$ where the constant  $c$ is the same as in Theorem \ref{thmA}. 
\end{corollary}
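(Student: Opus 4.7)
The plan is to deduce the eigenvalue bound from the weighted volume growth via a Brooks-type argument, now for the drift Laplacian $\Delta_f$. The relevant variational characterization is
\[
\lambda_1(\Delta_f) = \inf_{\phi}\frac{\int_M |\nabla \phi|^2\,dV_f}{\int_M \phi^2\,dV_f},
\]
with infimum taken over nonzero compactly supported Lipschitz $\phi$, since $\Delta_f$ is self-adjoint on $L^2(dV_f)$. Fix a point $p\in M$ and let $r(x) = \mathrm{dist}(p,x)$. For any $\alpha>c/2$, where $c$ is the constant from \thmref{thmA}, I will produce a family of test functions whose Rayleigh quotient converges to $\alpha^2$.

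The natural candidate is $u(x) = e^{-\alpha r(x)}$, which is globally Lipschitz with $|\nabla u| = \alpha\, u$ almost everywhere off the cut locus. To make this compactly supported, I would take a standard cutoff $\psi_R$ with $\psi_R \equiv 1$ on $B_p(R)$, $\psi_R \equiv 0$ off $B_p(2R)$, and $|\nabla \psi_R|\le 2/R$, and set $\phi_R = \psi_R\, u$. Using the elementary inequality $(a+b)^2\le (1+\varepsilon)a^2 + (1+\varepsilon^{-1})b^2$, I bound
\[
|\nabla \phi_R|^2 \le (1+\varepsilon)\alpha^2 \psi_R^2 e^{-2\alpha r} + (1+\varepsilon^{-1})|\nabla \psi_R|^2 e^{-2\alpha r}.
\]
Letting $I(R) = \int_{B_p(R)} e^{-2\alpha r}\,dV_f$, this gives
\[
\frac{\int_M |\nabla \phi_R|^2 dV_f}{\int_M \phi_R^2 dV_f} \le (1+\varepsilon)\alpha^2\cdot \frac{I(2R)}{I(R)} + \frac{4(1+\varepsilon^{-1})}{R^2}\cdot \frac{I(2R)-I(R)}{I(R)}.
\]

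The central quantitative step is to show $I(\infty)<\infty$ using \thmref{thmA}. Decomposing into dyadic annuli and applying $\Vol_f(B_p(r))\le be^{cr}$,
\[
I(\infty) \le \sum_{k=0}^{\infty} e^{-2\alpha k}\,\Vol_f(B_p(k+1)) \le be^{c}\sum_{k=0}^{\infty} e^{(c-2\alpha)k},
\]
which is finite precisely because $\alpha>c/2$. Consequently $I(2R)/I(R)\to 1$ and $I(2R)-I(R)\to 0$ as $R\to\infty$, so the Rayleigh quotient of $\phi_R$ converges to $(1+\varepsilon)\alpha^2$. Letting $\varepsilon\to 0$ yields $\lambda_1(\Delta_f)\le \alpha^2$ for every $\alpha>c/2$, and taking $\alpha\searrow c/2$ gives the desired bound $\lambda_1(\Delta_f)\le c^2/4$.

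The main technical nuisance I anticipate is that $r$ is only Lipschitz and fails to be smooth on the cut locus of $p$, but this is harmless since the test functions are used in the Lipschitz class; the identity $|\nabla u|=\alpha u$ a.e.\ is enough for the Rayleigh quotient. Beyond that, everything reduces to the finiteness of $I(\infty)$, which is exactly the content supplied by \thmref{thmA}. No special features of the quasi-Einstein equation are needed beyond the volume growth bound already established.
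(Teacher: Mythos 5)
Your proposal is correct and follows essentially the same route as the paper: exponentially decaying test functions $e^{-\alpha r}$ with $2\alpha>c$, multiplied by a cutoff, the elementary inequality $(a+b)^2\le (1+\varepsilon)a^2+(1+\varepsilon^{-1})b^2$, and the weighted volume bound of \thmref{thmA} to make the cutoff error vanish in the Rayleigh quotient. The only difference is bookkeeping: you prove $\int_M e^{-2\alpha r}\,dV_f<\infty$ and use $I(2R)/I(R)\to 1$, whereas the paper bounds the denominator below by the fixed quantity $\int_{B_p(1)}e^{2\alpha r}\,d\mu$ and shows the error $e^{2\alpha R}\Vol_f(B_p(R))\to 0$ directly, which is the same mechanism.
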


In \cite{BRR2019}, Batista, Ranieri and Ribeiro obtained  volume growth  lower bounds for geodesic balls of complete noncompact $m$-quasi-Einstein manifolds with $\lambda<0.$  Here, we will provide a  volume growth upper bound for geodesic balls of complete noncompact quasi-Einstein manifolds with $\lambda<0$ and $\mu\leq 0.$ More precisely, we will  establish the following result.

\begin{theorem}\label{thm-2}
	Let $(M^{n},\,g,\,f)$ be a complete noncompact $m$-quasi-Einstein manifold  with $\lambda<0,$ $\mu\leq 0$ and $m\in(1,\infty).$ Then there exist positive constants $a$ and $b$ such that
	
 \begin{equation}
 \label{e1ad}
	\Vol(B_{p}(r))\leq a e^{b\,r},
	\end{equation} for any $r>0$ sufficiently large.

\end{theorem}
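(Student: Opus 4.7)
The plan is to derive a universal gradient bound $|\nabla f|\leq\sqrt{-m\lambda}$ on the potential and to combine it with a weighted Laplacian comparison coming from $Ric_f^m=\lambda g<0$. With these two ingredients in hand, the distance function $r(x)=d(x,p)$ will satisfy $\Delta r\leq C$ along minimizing geodesics from $p$ for some constant $C=C(n,m,\lambda)$, and integration of the resulting Jacobian bound in polar normal coordinates gives the exponential estimate.

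The crucial step is the gradient bound. I would apply Bochner's formula to $|\nabla f|^2$, use \eqref{eqqem} to replace $Ric(\nabla f,\nabla f)$, and use \eqref{2eq} to compute $\nabla\Delta f$, which together yield
\begin{equation*}
\tfrac{1}{2}\Delta_f|\nabla f|^2=|\nabla^2 f|^2+\lambda|\nabla f|^2+\tfrac{1}{m}|\nabla f|^4-2\mu e^{\frac{2}{m}f}|\nabla f|^2.
\end{equation*}
The hypothesis $\mu\leq 0$ enters exactly here, making the last term non-negative, so that
\begin{equation*}
\tfrac{1}{2}\Delta_f|\nabla f|^2\;\geq\;|\nabla f|^2\Bigl(\lambda+\tfrac{|\nabla f|^2}{m}\Bigr).
\end{equation*}
If $|\nabla f|^2$ attained a value strictly larger than $-m\lambda$ at an interior maximum, the right-hand side would be strictly positive while the left-hand side is non-positive, a contradiction. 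In the noncompact setting I would invoke a weighted Omori--Yau--type maximum principle, applicable because \eqref{eqqem} forces $Ric_f=\lambda g+\frac{1}{m}df\otimes df\geq\lambda g$, in order to produce a sequence of approximate maxima and conclude $|\nabla f|^2\leq -m\lambda$ globally.

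With the gradient bound in hand, the identity $Ric_f^m=\lambda g$ puts us in position to apply the Bakry--Qian/Wei--Wylie $m$-Bakry--Emery Laplacian comparison of effective dimension $n+m$, giving along any minimizing geodesic from $p$
\begin{equation*}
\Delta_f r\;\leq\;\sqrt{(n+m-1)|\lambda|}\,\coth\!\Bigl(\sqrt{\tfrac{|\lambda|}{n+m-1}}\,r\Bigr)\;\leq\;C_1
\end{equation*}
for $r\geq 1$. Since $\Delta r=\Delta_f r+\langle\nabla f,\nabla r\rangle$ and $|\nabla f|\leq\sqrt{-m\lambda}$, we obtain $\Delta r\leq C_1+\sqrt{-m\lambda}=:C$ on the regular part of the exponential map.

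Finally, in polar normal coordinates at $p$, writing the volume element as $J^{n-1}(\theta,r)\,dr\,d\theta$, the Laplacian bound $\Delta r\leq C$ is equivalent to $\partial_r\log J^{n-1}(\theta,r)\leq C$. Integrating in $r$ gives $J^{n-1}(\theta,r)\leq J^{n-1}(\theta,1)e^{C(r-1)}$, and a second integration over $\mathbb{S}^{n-1}$ and $s\in[1,r]$ produces constants $a,b>0$ such that $\Vol(B_p(r))\leq ae^{br}$ for all $r$ sufficiently large. The primary obstacle is the gradient-bound step; the sign condition $\mu\leq 0$ is essential for the Bochner inequality to close with a non-negative remainder, consistent with the hyperbolic-type model of Example \ref{example2}, where $|\nabla f|$ remains bounded and the volume growth rate is genuinely exponential.
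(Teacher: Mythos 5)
Your overall strategy (a pointwise bound on $|\nabla f|$ plus a weighted Laplacian comparison, then integration of $\partial_r\log J^{n-1}$) is viable and genuinely different from the paper's, but the gradient-bound step as written has a real gap. The weighted Omori--Yau principle you invoke applies to functions that are bounded above, and you have not shown that $|\nabla f|^{2}$ is bounded -- that is essentially what you are trying to prove, so the argument is circular as stated. Moreover, the justification you give for its applicability, namely $Ric_f\geq\lambda g$, is not by itself sufficient for an Omori--Yau principle for $\Delta_f$ (comparison theorems under a lower bound on $Ric_f$ alone require additional control on $f$); the correct justification available here is the finite-dimensional condition $Ric_f^{m}=\lambda g$, which gives the Bakry--Qian/Wei--Wylie bound $\Delta_f r\leq C$ outside a compact set -- the very fact you use in your second step. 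Even granting that principle, you must either first establish boundedness of $|\nabla f|^{2}$ (for instance by a cutoff argument based on that comparison, or by quoting a Pigola--Rigoli--Setti-type result for inequalities of the form $\Delta_f v\geq \tfrac{2}{m}v^{2}+2\lambda v$ under the weak maximum principle) or bypass the Bochner computation altogether. The Bochner identity itself, and the way $\mu\leq0$ enters with the right sign, are correct.

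The paper avoids all of this: its gradient bound is purely algebraic. Wang's result that $R\geq\lambda n$ when $\lambda\leq0$, combined with the first integral \eqref{fundamentalequationforu}, immediately gives $|\nabla f|^{2}\leq\frac{m^{2}}{m-1}(-\lambda)$ (this is exactly where $\mu\leq0$ and $m>1$ are used; note your route never uses $m>1$ and would yield the stronger bound $-m\lambda$ if completed), see \eqref{thm2-eq11}. The paper then works directly with $w=J'/J$: it integrates the Riccati inequality keeping the $\int_1^r w^{2}$ term, and a Cauchy--Schwarz estimate together with a first-crossing contradiction argument yields $\int_1^r w(t)\,dt\leq br$, hence $\log J(r)\leq br+\log J(1)$, with no comparison theorem or maximum principle at all. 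If you replace your Bochner/Omori--Yau step by the paper's algebraic bound and keep your second half ($\Delta r=\Delta_f r+\langle\nabla f,\nabla r\rangle$ with the Bakry--Qian comparison for $\Delta_f r$), you obtain a complete and clean alternative proof; as submitted, the unproven boundedness needed for the maximum principle is the gap.
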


Observe that  Example \ref{example2} has exponential volume growth. Therefore,  the growth rate obtained in (\ref{e1ad}) of Theorem \ref{thm-2} is optimal. However, the constant $a$ depends of the volume of unit ball.

\section{Volume growth of gradient shrinking Ricci solitons}\label{soliton}

In this section, we shall present the proof of Theorem \ref{thm-1}. Before doing this, let us recall some important features of gradient shrinking Ricci solitons (cf. \cite{Hamilton2}). More precisely, up to normalization of the potential function $f,$ we have the following lemma. 

\begin{lemma}[\cite{Hamilton2}]
\label{lem1}
Let $\big(M^n,\,g,\,f\big)$ be a gradient shrinking Ricci soliton. Then we have:
\begin{enumerate}
\item $R+\Delta f=\dfrac{n}{2}.$
%\item $\frac{1}{2}\nabla R=Ric(\nabla f).$
%\item $\Delta_{f} R= R-2|Ric|^{2}.$
\item $R+|\nabla f|^{2}= f$.
\item $\Delta f-|\nabla f|^{2}+ f=\dfrac{n}{2}$.
%\item $\Delta_{f} R_{ij}=R_{ij}-2R_{ikjl}R_{kl}.$
%\item $\Delta_{f} Rm =Rm+ Rm\ast Rm.$
%\item $\nabla_{l}R_{ijkl}=R_{ijkl}f_{l}=\nabla_{j}R_{ik}-\nabla_{i}R_{jk}.$
\end{enumerate}
\end{lemma}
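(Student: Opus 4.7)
The plan is to establish the three identities by tracing the soliton equation and then exploiting the second Bianchi identity together with the assumed normalization (\ref{eq-f}).

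First I would prove (1) simply by contracting the soliton equation (\ref{maineq}) $Ric+\nabla^{2}f=\tfrac12 g$ with the inverse metric $g^{ij}$: the trace of $Ric$ gives $R$, the trace of $\nabla^{2}f$ gives $\Delta f$, and the trace of $\tfrac12 g$ gives $n/2$. Identity (2) is then the normalization condition (\ref{eq-f}) itself, but to see that such a normalization is actually available I would verify that $R+|\nabla f|^{2}-f$ is a constant on $M$. The standard way to do this is to differentiate: using the soliton equation one computes
\begin{equation*}
\nabla^{2}f(\nabla f)=\tfrac{1}{2}\nabla|\nabla f|^{2},
\end{equation*}
so that the soliton equation contracted with $\nabla f$ yields
\begin{equation*}
Ric(\nabla f)+\tfrac12 \nabla|\nabla f|^{2}=\tfrac12 \nabla f.
\end{equation*}
On the other hand the contracted second Bianchi identity gives $\mathrm{div}(Ric)=\tfrac12\nabla R$, and taking the divergence of (\ref{maineq}) together with the commutation formula $\mathrm{div}(\nabla^{2}f)=\nabla(\Delta f)+Ric(\nabla f)$ and part (1) (which gives $\nabla(\Delta f)=-\nabla R$) produces the Hamilton identity $Ric(\nabla f)=\tfrac12\nabla R$. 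Combining the two displayed equalities yields $\nabla\bigl(R+|\nabla f|^{2}-f\bigr)=0$, so $R+|\nabla f|^{2}-f$ is a constant, and (2) follows after absorbing this constant into $f$ (which is the normalization (\ref{eq-f}) adopted in the paper).

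Finally, identity (3) is immediate: subtracting (2) from (1) eliminates $R$ and gives
\begin{equation*}
\Delta f-|\nabla f|^{2}+f=\tfrac{n}{2}.
\end{equation*}

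The only subtle step in the plan is the derivation of Hamilton's identity $Ric(\nabla f)=\tfrac12\nabla R$; everything else is either a trace or a direct algebraic combination. I do not anticipate a genuine obstacle, since these are classical computations going back to Hamilton, and the only care needed is in keeping track of signs when invoking the Bochner/Bianchi identities.
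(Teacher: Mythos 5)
Your argument is correct and is exactly the classical computation the paper is invoking: the paper does not prove this lemma at all, but simply cites Hamilton, and your route (trace the soliton equation for (1), derive $Ric(\nabla f)=\tfrac12\nabla R$ from the contracted Bianchi identity and the commutation formula, conclude $\nabla\bigl(R+|\nabla f|^{2}-f\bigr)=0$ and normalize $f$ to get (2), then subtract to get (3)) is the standard proof behind that citation. One cosmetic remark: the identity $\nabla^{2}f(\nabla f)=\tfrac12\nabla|\nabla f|^{2}$ holds for any smooth function and does not use the soliton equation, and the constancy of $R+|\nabla f|^{2}-f$ uses connectedness of $M$; neither point affects the validity of your proof.
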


In \cite{Chen}, Chen showed that every gradient shrinking Ricci soliton has nonnegative scalar curvature. Concerning the potential function $f,$ Cao and Zhou \cite{Cao} proved that 
\begin{equation}
\label{eqfbeh}
\frac{1}{2}\Big(r(x)-c\Big)^{2}\le f(x)\le \frac{1}{2}\Big(r(x)+c\Big)^{2},
\end{equation} for all $r(x)\geq r_{0}.$ This combined with Lemma \ref{lem1} provides an asymptotic behaviour of the scalar curvature.

\vspace{0.3cm}

Now we are ready to prove Theorem \ref{thm-1}.

\vspace{0.3cm}

\subsection{Proof of Theorem \ref{thm-1}}
\begin {proof}
Initially, if $x=(\theta,r)$ is not in the cut-locus $\mathcal{C}(p)$ of $p$, we have
\[
w(\theta,r)=\frac{\partial}{\partial r}\log J(\theta,r)=\frac{\frac{\partial J}{\partial r}}{J}(\theta,r).
\]
In particular, it is well known that
\[
w'(\theta,r)+w^{2}(\theta,r)+\frac{1}{n-1}Ric \Big(\frac{\partial}{\partial r},\frac{\partial}{\partial r}\Big)\le0,
\]
where $w':=\frac{\partial w}{\partial r}$. Multiplying by $r^{2}$ and integrating from $\varepsilon$ to $r$ yields 
\[
\int_{\varepsilon}^{r}s^{2}w'ds+\int_{\varepsilon}^{r}s^{2}w^{2}ds+\int_{\varepsilon}^{r}\frac{1}{n-1} Ric\Big(\frac{\partial}{\partial s},\frac{\partial}{\partial s}\Big)s^{2}ds\le 0.
\]
Let $\varepsilon\to 0$. Then, it follows that
\[
\begin{split}r^{2}w & \le-\int_{0}^{r}(sw-1)^{2}ds+\int_{0}^{r}\left[1-s^{2}\frac{1}{n-1} Ric\Big(\frac{\partial}{\partial s},\frac{\partial}{\partial s}\Big)\right]ds\\
 & \le\int_{0}^{r}\left[1-\frac{s^{2}}{n-1} Ric\Big(\frac{\partial}{\partial s},\frac{\partial}{\partial s}\Big)\right]ds.
\end{split}
\]
Therefore, we obtain 
\begin{equation}\label{eq-6}
\Big(\log\frac{J}{r}\Big)'\le-\frac{1}{r^{2}}\int_{0}^{r}\frac{s^{2}}{n-1} Ric\Big(\frac{\partial}{\partial s},\frac{\partial}{\partial s}\Big)ds.
\end{equation} Now, upon integrating (\ref{eq-6}) from $\varepsilon$ to $r,$ we let $\varepsilon\rightarrow 0$. This yields  
\begin{equation}\label{eq-7}
\begin{split}(n-1)\Big(\log\frac{J}{r}\Big) & \le-\int_{0}^{r}\left[\frac{1}{t^{2}}\int_{0}^{t}s^{2} Ric\Big(\frac{\partial}{\partial s},\frac{\partial}{\partial s}\Big)ds\right]dt\\
 & =\frac{1}{r}\int_{0}^{r}s^{2} Ric\Big(\frac{\partial}{\partial s},\frac{\partial}{\partial s}\Big)ds-\int_{0}^{r}t Ric\Big(\frac{\partial}{\partial t},\frac{\partial}{\partial t}\Big)dt,
\end{split}
\end{equation} where we have used that $\displaystyle\lim_{r\to0}\frac{J}{r}=1$. Thus, combining (\ref{eq-6}) and (\ref{eq-7}) we obtain
\begin{equation}\label{eq-8}
\begin{split}(n-1)\Big(r\log\frac{J}{r}\Big)' & \le-\int_{0}^{r}s Ric\Big(\frac{\partial}{\partial s},\frac{\partial}{\partial s}\Big)ds.\end{split}
\end{equation}

In order to proceed, let $\gamma$ be the minimizing geodesic joining  $x$ from $p$ and denote
$f(s)=f(\gamma(s))$, $0\leq s\leq r.$ The Ricci soliton equation (\ref{maineq}) gives 
\begin{equation}\label{eq-8-1}
Ric \Big(\frac{\partial}{\partial s},\frac{\partial}{\partial s}\Big)(\theta,s)=\frac{1}{2}-\frac{\partial^{2}f}{\partial s^{2}}=\frac{1}{2}-f''(s).
\end{equation} 
Substituting (\ref{eq-8-1}) into (\ref{eq-8}) results in
\begin{eqnarray}
\label{eq-9}
(n-1)\left(r\log\frac{J}{r}\right)' & \le &  -\int_{0}^{r}s\Big(\frac{1}{2}-f''(s)\Big)ds\nonumber\\
 & =&-\frac{r^{2}}{4}+rf'(r)-f(r)+f(0)\nonumber\\
 & =&-\frac{r^{2}}{4}+r\langle\nabla f,\nabla r\rangle(x)-f(x)+f(p)\nonumber\\
 & =&-\frac{r^{2}}{4}+r\langle\nabla f,\nabla r\rangle(x)-R(x)-|\nabla f|^{2}(x)+f(p)\nonumber\\
 & =&-\left(\frac{r}{2}-\langle\nabla f,\nabla r\rangle(x)\right)^{2}-\left(|\nabla f|^{2}(x)-\langle\nabla f,\nabla r\rangle^{2}(x)\right)-R(x)+f(p)\nonumber\\
 & \le&-R(x)+f(p).
\end{eqnarray} Here we have used the equality $f=R+|\nabla f|^{2}.$ Next, integrating  (\ref{eq-9})
from $\varepsilon$ to $r$ and letting $\varepsilon\rightarrow 0$ we obtain
\[
(n-1)\Big(r\log\frac{J}{r}\Big)\le-\int_{0}^{r}R(\theta,s)ds+f(p)r.
\]
Hence, it follows that 
\[
J^{n-1}(\theta,r)\le e^{-\frac{1}{r}\int_{0}^{r}[R(\theta,s)-f(p)]ds}r^{n-1}.
\]
Consequently,
\begin{equation}\label{eq-10}
\begin{split}
\Vol(B_p(r))&=\int_{\Bbb{S}^{n-1}}\int_{0}^{\min\{r,\rho(\theta)\}}J^{n-1}(\theta,r)drd\theta\\
&\le  \int_{\Bbb{S}^{n-1}}\int_{0}^{\min\{r,\rho(\theta)\}}e^{f(p)-\frac{1}{r}\int_{0}^{r}R(\theta,s)ds}r^{n-1}drd\theta\\
&\le  \int_{\Bbb{S}^{n-1}}\int_{0}^{r}e^{f(p)-\frac{1}{r}\int_{0}^{r}R(\theta,s)ds}r^{n-1}drd\theta,
\end{split}
\end{equation} in the above $\rho(\theta)$ denotes the cut-locus radius in the direction
$\theta$. Therefore, our assertion (\ref{ine-vol1}) is proved.

Proceeding, if the equality in (\ref{eq-10}) holds, we may use (\ref{eq-9}) to deduce 
\begin{equation}\nonumber
\left(\frac{r}{2}-\langle\nabla f,\nabla r\rangle(x)\right)^{2}=0 \quad\hbox{and}\quad
|\nabla f|^{2}(x)-\langle\nabla f,\nabla r\rangle^{2}(x)=0.
\end{equation}
This implies that $(\nabla f)(x)=\langle \nabla f, \nabla r\rangle\dfrac{\partial }{\partial r}=\dfrac{r}{2}\dfrac{\partial }{\partial r}$ and then
\begin{equation}\nonumber
f(x)=f(p)+\dfrac{r^2}{4}.
\end{equation}  By the property of the cut-locus and the smoothness of $f,$ the above expressions for $f$ and $\nabla f$  hold for all $x\in M.$
Note that $f$ satisfies the equation $f=R+|\nabla f|^2$, i.e.,  Equation (2) in Lemma \ref{lem1}. Hence, we have
$$R(x)+\dfrac{r^2}{4}=f(x)=f(p)+\dfrac{r^2}{4}.$$ Then $R(x)=f(p)$ is constant.
So, the equality in (\ref{eq-10}) becomes
\begin{equation}\label{ine-vol2-1}
\Vol(B_{p}(r))= \Vol_{\mathbb{R}^{n}}(r).
\end{equation}
However, it was proved in \cite{ChZ2013} and \cite{Zhang} that the volume growth of geodesic spheres is no more than $cr^{n-2\inf_MR}$ for some positive constant $c$.  Comparing this with (\ref{ine-vol2-1}), we obtain  $\displaystyle\inf_M R=0.$ Therefore,  $R\equiv 0$ on $M^n$ and hence, as a gradient shrinking Ricci soliton, $M^n$ must be the Euclidean space $\mathbb{R}^n$ with the standard metric. Moreover, we have $f(p)=0$ and $f(x)=\dfrac{r^2}{4}=\dfrac{|x-p|^2}{4}$. So, $(M^{n},\,g,\, f)$ is the Gaussian shrinking soliton, which finishes the proof of the theorem.

\end{proof}

\vspace{0.3cm}

\subsubsection{Proof of Corollary  \ref{cor-1}} 
\begin{proof} Since the gradient shrinking Ricci soliton $(M^n,\,g,\,f)$ has nonnegative scalar curvature, it suffices to use  \eqref{ine-vol1-c} to infer
\begin{equation}\label{ine-vol2-s}
\Vol(B_{p}(r))\le e^{f(p)-\inf_MR}\Vol_{\mathbb{R}^n}(r),
\end{equation} as asserted.

On the other hand, if the equality holds in \eqref{ine-vol2} (or \eqref{ine-vol2-c}), then the equality also occurs in \eqref{ine-vol1-c} and therefore, the result follows from Theorem \ref{thm-1}. 

\end{proof}

\vspace{0.3cm}

\begin{remark}
We point out that if $p\in M$ is a minimum point of both $f$ and the scalar curvature $R,$ then \eqref{ine-vol2} implies that $\Vol(B_p(r))$ is at most the volume of the Euclidean ball of radius $r$.
\end{remark}

In the sequel, we going to present a generalization of Theorem \ref{thm-1} and Corollary \ref{cor-1} to the case of  complete smooth metric measure spaces $(M^n,\, g,\, e^{-f}dv).$ More precisely, we have the following result.

\begin{theorem}\label{thm-1-f}
 Let $(M^n,\, g,\, e^{-f}dv)$ be an $n$-dimensional  complete smooth metric measure space satisfying $Ric_f\geq \frac12g$ and $|\nabla f|^2\leq f$. Then for all $r>0,$ the volume of the geodesic ball $B_p(r)$ of radius $r$ centered at a point $p$ satisfies
\begin{equation}
\label{ine-vol1}
\Vol(B_{p}(r))\le \int_{\Bbb{S}^{n-1}}\int_{0}^{r}e^{f(p)-\frac{1}{r}\int_{0}^{r}(f-|\nabla f|^2)(\theta,s)ds}r^{n-1}drd\theta
\end{equation}
and 
\begin{equation}
\label{ine-vol1-cc}
\Vol(B_{p}(r))\le e^{f(p)-\inf_{M}(f-|\nabla f|^2)}\omega_nr^n\le e^{f(p)}\omega_nr^n,
\end{equation} where $\omega_n$ denotes the volume of the unit Euclidean ball. 
\end{theorem}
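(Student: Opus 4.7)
The plan is to adapt the proof of Theorem \ref{thm-1} almost verbatim, with the soliton equation and the identity $f = R + |\nabla f|^{2}$ each replaced by one of the two one-sided inequalities supplied by the hypotheses $Ric_{f}\geq \tfrac{1}{2} g$ and $|\nabla f|^{2}\leq f$. The preliminary Riccati chain producing
\[
(n-1)\Big(r\log\tfrac{J}{r}\Big)' \leq -\int_{0}^{r} s\, Ric\Big(\tfrac{\partial}{\partial s}, \tfrac{\partial}{\partial s}\Big)\, ds
\]
is purely Riemannian and carries over unchanged.

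The first substitution happens where (\ref{eq-8-1}) is used: instead of an equality, the hypothesis $Ric_{f}\geq \tfrac{1}{2} g$ yields only the one-sided bound $Ric(\partial_{s},\partial_{s}) \geq \tfrac{1}{2} - f''(s)$ along a minimizing unit-speed geodesic from $p$ to $x = (\theta, r)$. Plugging this in and integrating by parts gives exactly the third line of (\ref{eq-9}),
\[
(n-1)\Big(r\log\tfrac{J}{r}\Big)' \leq -\tfrac{r^{2}}{4} + r\langle \nabla f, \nabla r\rangle(x) - f(x) + f(p).
\]
The second substitution replaces the use of the identity $f = R + |\nabla f|^{2}$: I would add and subtract $|\nabla f|^{2}$ on the right-hand side and complete the square to obtain
\[
(n-1)\Big(r\log\tfrac{J}{r}\Big)' \leq -\Big(\tfrac{r}{2} - \langle \nabla f, \nabla r\rangle\Big)^{2} - \big(|\nabla f|^{2} - \langle\nabla f, \nabla r\rangle^{2}\big) - \big(f - |\nabla f|^{2}\big)(x) + f(p).
\]
The first two terms are nonpositive (the second by Cauchy--Schwarz since $|\nabla r| = 1$), so they may be dropped, leaving
\[
(n-1)\Big(r\log\tfrac{J}{r}\Big)' \leq -\big(f - |\nabla f|^{2}\big)(x) + f(p).
\]
The hypothesis $|\nabla f|^{2}\leq f$ now plays exactly the role that Chen's nonnegativity of scalar curvature played for solitons, ensuring that $f - |\nabla f|^{2}\geq 0$; indeed on a shrinking soliton this quantity equals $R$ by Lemma \ref{lem1}(2), so the argument specializes correctly to Theorem \ref{thm-1}.

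Integrating from $0$ to $r$, exponentiating, and restricting the radial integration to the cut-locus as in (\ref{eq-10}) yields
\[
J^{n-1}(\theta, r) \leq r^{n-1}\exp\!\Big(f(p) - \tfrac{1}{r}\int_{0}^{r} (f - |\nabla f|^{2})(\theta, s)\, ds\Big),
\]
which integrates over $\mathbb{S}^{n-1}$ to give (\ref{ine-vol1}). For (\ref{ine-vol1-cc}) I would then bound the inner integrand from below by the nonnegative constant $\inf_{M}(f - |\nabla f|^{2})$, producing the first inequality in (\ref{ine-vol1-cc}); the second inequality is immediate from $\inf_{M}(f - |\nabla f|^{2})\geq 0$. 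I do not anticipate any substantive obstacle: the only care needed is checking that every inequality in the original soliton argument preserves its direction after the two equalities are weakened, which is automatic since one is chasing an upper bound on volume from a lower bound on $Ric_{f}$.
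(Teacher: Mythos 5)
Your proposal is correct and is exactly the argument the paper intends: the paper's proof of Theorem \ref{thm-1-f} simply says to repeat the steps of Theorem \ref{thm-1} with $Ric+\nabla^2 f\ge\frac12 g$ in place of the soliton equality and $f-|\nabla f|^2$ in place of $R$, which is what you carry out, including the correct check that the one-sided bound enters with the right sign and that $|\nabla f|^2\le f$ is only needed for the final inequality in \eqref{ine-vol1-cc}.
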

\begin{proof}
Following the same steps of the proof of inequality \eqref{ine-vol1-c}, it suffices to use $Ric+\nabla^2f\geq \frac12g$ instead of $Ric+\nabla^2f= \frac12g$ and $f-|\nabla f|^2$ instead of $R,$ respectively, in order to obtain \eqref{ine-vol1-cc}. So, we omit the details.
\end{proof}

\vspace{0.3cm}

\section{Volume growth of quasi-Einstein manifolds}

In this section, we will present the proof of Theorems \ref{thmqEmNew}, \ref{thmA} and \ref{thm-2}. To do so, we need to recall some basic facts. Firstly, we assume that $\big(M^{n},\,g,\,f\big)$ is an $m$-quasi-Einstein manifold,  $m<\infty$, satisfying Eq. (\ref{eqqem}).  In this case, we may consider the function $u = e^{-\frac{f}{m}}$ on $M^n$  and immediately get 
 
 \begin{equation}
 \label{1a}
 \nabla u = -\frac{u}{m}\nabla f
 \end{equation} as well as

\begin{equation}\label{fu}
\nabla^2 f - \frac{1}{m}df \otimes df = -\frac{m}{u}\nabla^2u.
\end{equation} 
In particular, notice that (\ref{eqqem}) and (\ref{fu}) yield 

\begin{equation}\label{quasiforu}
Ric -\frac{m}{u}\nabla^2u=\lambda g. 
\end{equation} Moreover, tracing (\ref{quasiforu}) we have
\begin{equation}
\label{traceu}
R -\frac{m}{u}\Delta u=\lambda n,
\end{equation} where $R$ denotes the scalar curvature of $M^n.$ Furthermore, taking into account (\ref{eqqem}) and (\ref{2eq}), it is not difficult to show that
\begin{equation}\label{fundamentalequationforu}
\frac{u^2}{m}(R-\lambda n) + (m-1)|\nabla u|^2 = -\lambda u^2 + \mu.
\end{equation} 
\vspace{0.3cm}

Now, we present the proofs of Theorems \ref{thmqEmNew}, \ref{thmA} and \ref{thm-2}.

\vspace{0.3cm}

\subsection{Proof of Theorem \ref{thmqEmNew}}
\begin{proof}
From  (\ref{eq-8})  we have

\begin{equation}
\label{eqK1}
(n-1)\left(r\log \frac{J}{r}\right)' \leq - \int_{0}^{r}s Ric\Big(\frac{\partial}{\partial s},\frac{\partial}{\partial s}\Big)ds.
\end{equation}

On the other hand, since $M^n$ is a quasi-Einstein manifold with $\lambda=0,$ we may use the fundamental equation (\ref{eqqem}) to infer

\begin{equation*}
Ric\Big(\frac{\partial}{\partial s},\frac{\partial}{\partial s}\Big)=-f''(s)+\frac{1}{m}(f'(s))^{2},
\end{equation*} where $f(s)=f(\gamma(s)),$ $0\leq s\leq r,$ and $\gamma$ is the minimizing geodesic joining $x$ from a fixed point $p.$ Therefore, returning to the inequality (\ref{eqK1}) we obtain

\begin{eqnarray*}
(n-1)\left(r\log \frac{J}{r}\right)'  &\leq & -\int_{0}^{r}s\Big[-f''(s)+\frac{1}{m}(f'(s))^{2}\Big]ds\nonumber\\&\leq & \int_{0}^{r}sf''(s)ds\nonumber\\&=& rf'(r)-f(r)+f(0).
\end{eqnarray*} Hence, it follows that

\begin{eqnarray*}
(n-1)\left(r\log \frac{J}{r}\right) \leq \int_{0}^{r}sf'(s)ds - \int_{0}^{r}f(s)ds +f(p)r.
\end{eqnarray*} Consequently,

\begin{eqnarray*}
(n-1)\left(r\log \frac{J}{r}\right) \leq rf(r)+f(p)r-2\int_{0}^{r}f(s)ds
\end{eqnarray*} and therefore, we obtain

\begin{eqnarray*}
J^{n-1}\leq e^{\big(f(r)-f(p)-\frac{2}{r}\int_{0}^{r}f(s)ds\big)}r^{n-1}.
\end{eqnarray*} This allows us to conclude

\begin{eqnarray}
\Vol(B_{p}(r)) &=&  \int_{\Bbb{S}^{n-1}}\int_{0}^{\min\{r,\rho(\theta)\}}J^{n-1}(\theta, r)dr d\theta \nonumber\\&\le & \int_{\Bbb{S}^{n-1}}\int_{0}^{\min\{r,\rho(\theta)\}}e^{\Phi}r^{n-1}dr d\theta\nonumber\\
&\le & \int_{\Bbb{S}^{n-1}}\int_{0}^{r}e^{\Phi}r^{n-1}dr d\theta,
\end{eqnarray} where $\Phi=f(\theta, r)+f(p)-\frac{2}{r}\int_{0}^{r}f(\theta, s)ds$ and $\rho(\theta)$ denotes the cut-locus radius in the direction of $\theta.$ So, the proof is finished.

\end{proof}

 \subsection{Proof of Theorem  \ref{thmA}}
 
  \begin{proof} Initially, using  the volume form $dV_{exp_{p}(r\theta)}=J^{n-1}(\theta,r)drd\theta$ for $\theta\in S_{p}M,$ for a fixed point $p$, we denote   weighted volume form $dV_f=m_{f}(r,\theta)drd\theta=e^{-f(r,\theta)}J^{n-1}(r,\theta)drd\theta$. Thus, we have 
 
\begin{eqnarray*}
w'(r) + w(r)^2+\frac{1}{n-1}Ric\Big(\dfrac{\partial}{\partial r},\dfrac{\partial }{\partial r}\Big) \leq 0,
\end{eqnarray*} where $w=\frac{J'}{J}.$ Hence, by using the fundamental equation (\ref{eqqem}) we infer

\begin{eqnarray}\label{eq-05}
w'(r) +w(r)^2+\frac{1}{n-1}\left[\frac{1}{m}f'(r)^{2}- f''(r)\right]\leq 0.
\end{eqnarray} Now, one  easily verifies that 

\begin{eqnarray*}
 \dfrac{m'_{f}}{m_f}(r) &=&\dfrac{-f'(r)e^{-f(r)}J^{n-1}(r)+(n-1)e^{-f(r)}J^{(n-2)}(r)J'(r)}{e^{-f(r)}J^{n-1}(r)}\nonumber\\&=&-f'(r)+(n-1)w(r).
\end{eqnarray*} Consequently, $$ \left(\dfrac{m'_{f}}{m_f}(r)\right)'=-f''(r)+(n-1)w'(r).$$ 
Together with  (\ref{eq-05}), this  implies that

 $$\left(\dfrac{m'_{f}}{m_f}(r)\right)'\leq 0.$$ Therefore, upon integrating this from $t_{0}$ to $t$ we obtain

\begin{equation*}
\dfrac{m'_{f}}{m_f}(t)\leq \dfrac{m'_{f}}{m_f}(t_{0}). 
\end{equation*} Integrating once more from $r_{0}$ to $r$ we deduce

\begin{equation*}
\log (m_{f}(r))\leq \log (m_{f}(r_{0}))+\dfrac{m'_{f}}{m_f}(t_{0})(r-r_{0}).
\end{equation*} From this, it follows that

\begin{eqnarray}
\Vol_{f}(B_{p}(r)) \leq be^{cr}
\end{eqnarray} for all $r\geq r_{0},$ where $b$ and $c$ are positive constants. This finishes the proof of the theorem. 
\end{proof}

\subsubsection{Proof of Corollary  \ref{cor1}}
\begin{proof}
The proof of Corollary  \ref{cor1} is standard. Firstly, for a constant $R>1$ sufficient large, we consider a cutoff function $\varphi$ on $B_{p}(R)$ such that $\varphi=1$ on $B_{p}(R-1),$ $\varphi=0$ on $M\setminus B_{p}(R)$ and $|\nabla \varphi|\leq C,$ where $C$ is a constant that does not depend of $R.$ Next, for an arbitrary $\delta>0,$ we set the function $\psi(y)=e^{\alpha r(y)}\varphi (y),$ where $\alpha=-\frac{c+\delta}{2}.$ Therefore, we have 
 
 \begin{eqnarray}
 \label{eq1a}
 |\nabla \psi|^{2}&=&|\alpha e^{\alpha r}\varphi \nabla r+e^{\alpha r}\nabla \varphi|^{2}\nonumber\\&\leq & e^{2\alpha r}(|\alpha| \varphi + |\nabla \varphi|)^{2}.
 \end{eqnarray} Now, we recall that  $$(x+y)^{2}\leq (1+\varepsilon)x^{2}+\left(\frac{1+\varepsilon}{\varepsilon}\right) y^{2},$$ for any $x,$ $y$ and $\varepsilon$ positive. Using this fact into (\ref{eq1a}) we get
 
 \begin{eqnarray}
 \label{eq2a1}
  |\nabla \psi|^{2}&\leq&e^{2\alpha r}\left[ (1+\varepsilon)(\alpha \varphi)^{2}+\left(\frac{1+\varepsilon}{\varepsilon}\right)|\nabla \varphi|^{2}\right].
 \end{eqnarray} Next, we recall that
 
 $$\lambda_{1}(\Delta_{f})=\inf_{\varphi\in C_{0}^{\infty}(M)}\frac{\int_{M}|\nabla \varphi|^{2} d\mu}{\int_{M}\varphi^{2}d\mu},$$ where $d\mu=e^{-f}dv.$ Hence, (\ref{eq2a1}) allows us to infer
  
 \begin{eqnarray}
\lambda_{1}(\Delta_{f})\leq (1+\varepsilon)\alpha^{2}+\left(\frac{1+\varepsilon}{\varepsilon}\right) \frac{\int_{M}e^{2\alpha r} |\nabla \varphi|^{2}d\mu}{\int_{M}e^{2\alpha r}\varphi^{2}d\mu}.
 \end{eqnarray}
 
Proceeding, we remember that $|\nabla \varphi|^{2}\leq C^{2}$ and $0\leq \varphi \leq 1$ on $B_{p}(R)\setminus B_{p}(R-1).$ Thus,  one easily verifies that
 
 \begin{eqnarray}
 \label{1as1}
\frac{\int_{M}e^{2\alpha r} |\nabla \varphi|^{2}d\mu}{\int_{M}e^{2\alpha r}\varphi^{2}d\mu}&=&   \frac{\int_{B_{p}(R)\setminus B_{p}(R-1)}e^{2\alpha r} |\nabla \varphi|^{2}d\mu}{\int_{M}e^{2\alpha r}\varphi^{2}d\mu} \nonumber\\&\leq & C^{2}  \frac{\int_{B_{p}(R)\setminus B_{p}(R-1)}e^{2\alpha r} d\mu}{\int_{B_{p}(1)}e^{2\alpha r} d\mu}\nonumber\\&\leq & C^{2} \frac{e^{2\alpha R}\Vol_{f}(B_{p}(R))}{\Vol_{f}(B_{p}(1))}.
 \end{eqnarray} Thereby, it follows by the proof of Theorem \ref{thmA} that 
 $$\frac{\Vol_{f}(B_{p}(R))}{\Vol_{f}(B_{p}(r_{0}))}\leq b e^{c(R-r_{0})},$$ where $p$ is fixed on $M^n.$ 
 Substituting this into (\ref{1as1})  with $r_{0}=1$ yields 
 
 \begin{eqnarray}
 \label{90aa}
 \frac{\int_{M}e^{2\alpha r} |\nabla \varphi|^{2}d\mu}{\int_{M} e^{2\alpha r}\varphi^{2}d\mu}&\leq &  C^{2} b e^{2\alpha R}e^{c(R-1)}\nonumber\\&=&C^{2}b e^{ -c} e^{(2\alpha+c)R}.
 \end{eqnarray} Next, by $2\alpha+c<0$, the right hand side of (\ref{90aa}) converges to zero when $R$ goes to $\infty.$ Consequently, we have
 
$$\lambda_{1}(\Delta_{f})\leq (1+\varepsilon)\alpha^{2},$$ for all $\varepsilon>0$ and $\delta>0,$ and  by  $\alpha=-\frac{c+\delta}{2}$ we obtain

$$\lambda_{1}(\Delta_{f})\leq (1+\varepsilon)\frac{(c+\delta)^{2}}{4},$$ for all $\varepsilon>0$ and $\delta>0.$  Hence,

$$\lambda_{1}(\Delta_{f})\leq \frac{c^{2}}{4}.$$ So, the proof is finished.
 
 \end{proof}

\subsection{Proof of Theorem  \ref{thm-2}}

\begin{proof} We take an approach similar to the one in \cite{MW} and \cite{MW2}.
To begin with, we already know from the proof of Theorem \ref{thm-1} that

  \begin{equation}\label{jacobi field}
  w'(\theta,r)+w^2(\theta,r)+\frac{1}{n-1}Ric\Big(\frac{\partial}{\partial r},\frac{\partial}{\partial r}\Big)\leq 0,
  \end{equation} where $w(\theta,r)=\frac{J'}{J}(\theta,r).$ Next, it is easy to check that $$\frac{u''}{u}=\Big(\frac{u'}{u}\Big)'+\Big(\frac{u'}{u}\Big)^{2}.$$ Jointly with the fundamental equation (\ref{quasiforu}), this  yields

\begin{eqnarray}\label{thm2-eq9}
  w'(r)+w^2(r)+\frac{m}{n-1}\left(\frac{u'}{u}\right)'(r)+\frac{m}{n-1}\left(\frac{u'}{u}\right)^{2}(r)+\lambda\leq 0,
\end{eqnarray} where we have omitted the dependence of $\theta.$ Upon integrating (\ref{thm2-eq9}) from $1$ to $r\ge 1$ we obtain

\begin{equation*}
w(r)+\int_{1}^{r}w^{2}(t)dt+\frac{m}{n-1}\left(\frac{u'}{u}\right)(r)+\lambda r\leq C,
\end{equation*} where $C$ is a positive constant independent of $r.$ Now, by using (\ref{1a}) we arrive at

\begin{equation}
\label{k1a}
w(r)+\int_{1}^{r}w^{2}(t)dt\leq -\lambda r+C+\frac{1}{n-1}f' (r).
\end{equation}

In order to proceed, we recall that Wang \cite{Wang} proved that if $\lambda\leq 0,$ then $R\ge \lambda n.$ This combined with (\ref{fundamentalequationforu}) allows to infer
 
 \begin{equation}\label{thm2-eq11}
 |f' (r)|^{2}\leq -\frac{m^{2}}{m-1}\lambda.
 \end{equation} Substituting (\ref{thm2-eq11}) into (\ref{k1a}) gives

 \begin{equation}
\label{k3a}
w(r)+\int_{1}^{r}w^2(t)dt\leq -\lambda r+C+\frac{1}{n-1}\sqrt{-\frac{m^{2}}{m-1}\lambda}.
\end{equation}

One verifies by Cauchy-Schwarz inequality that

$$\frac{1}{r}\left(\int_{1}^{r}w(t)dt\right)^{2}\leq \frac{1}{r-1}\left(\int_{1}^{r}w(t)dt\right)^{2}\leq \int_{1}^{r}w^{2}(t)dt.$$ Hence, from (\ref{k3a}) we deduce

\begin{equation}
\label{k4a}
w(r)+\frac{1}{r}\left(\int_{1}^{r}w(t)dt\right)^{2}\leq -\lambda r+C+\frac{1}{n-1}\sqrt{-\frac{m^{2}}{m-1}\lambda}.
\end{equation} We now claim that for any $r\ge 1$, it holds that

\begin{equation}
\label{c1a}
\int_{1}^{r}w(t)dt\leq \left(\sqrt{C+\frac{1}{n-1}\sqrt{-\frac{m^{2}}{m-1}\lambda}-\lambda}\right)  r.
\end{equation} In order to prove this we set $$h(r)=b r-\int_{1}^{r}w(t)dt,$$ where $b=\sqrt{C+\frac{1}{n-1}\sqrt{-\frac{m^{2}}{m-1}\lambda}-\lambda}.$ Therefore, it suffices to prove that $h(r)\ge 0$ for all $r\ge 1.$ Now, we argue by contradiction. Assume that $h$ does not remain nonnegative for all $r\ge 1$. Noticing that $h(1)>0$, we may let $\alpha>1$ be the first number such that $h(\alpha)=0.$ It  follows that $0=h(\alpha)=b \alpha-\int_{1}^{\alpha}w(t)dt,$ that is,  $$\int_{1}^{\alpha}w(t)dt=b \alpha.$$ Substituting this into (\ref{k4a}) we obtain

$$w(\alpha)+\left(C+\frac{1}{n-1}\sqrt{-\frac{m^{2}}{m-1}\lambda}-\lambda\right)\alpha\leq -\lambda \alpha+C+\frac{1}{n-1}\sqrt{-\frac{m^{2}}{m-1}\lambda}.$$ Since $\alpha>1,$ we conclude that $w(\alpha)<0.$ Hence, we deduce

$$h'(\alpha)=\sqrt{C+\frac{1}{n-1}\sqrt{-\frac{m^{2}}{m-1}\lambda}-\lambda}-w(\alpha)>0. $$ This implies the existence of a small enough $\varepsilon>0$ such that $h(\alpha-\varepsilon)<h(\alpha)=0,$ which  leads to a contradiction with the choice of $\alpha$ and therefore we have proved the claim.

Proceeding, we have from (\ref{c1a}) that 

\begin{equation*}
\int_{1}^{r}\frac{J'}{J}(t)dt \leq \left(\sqrt{C+\frac{1}{n-1}\sqrt{-\frac{m^{2}}{m-1}\lambda}-\lambda}\right)  r.
\end{equation*} Consequently,

\begin{equation}
\log (J(r))\leq \left(\sqrt{C+\frac{1}{n-1}\sqrt{-\frac{m^{2}}{m-1}\lambda}-\lambda}\right)  r +\log (J(1))
\end{equation} for all $r\ge 1.$ Thus, we have $$Vol(B_{p}(r))\leq a e^{b r},$$ where $a$ and $b=\left(\sqrt{C+\frac{1}{n-1}\sqrt{-\frac{m^{2}}{m-1}\lambda}-\lambda}\right)$ are positive constants. So, the proof of  the volume growth estimate is finished. 

\end{proof}

\begin{bibdiv}
\begin{biblist}

\bib{BE}{article}{author={Bakry, Dominique},author={\'Emery, Michael}, title={Diffusions Hypercontractives}, journal={in S\'eminaire de probabilit\'es XIX, 1983/84. Lecture Notes in Math. Springer, Berlin.}, volume={1123}, date={1985}, pages={177-206},review={\MR{0889476}}}

\bib{Besse}{article}{author={Besse, Arthur}, title={Einstein Manifolds}, journal={Springer-Verlag, Berlin}, date={1987}, review={\MR{867684}}}

\bib{Ernani2}{article}{author={Barros, Abd\^enago}, author={Batista, Rondinelle}, author={Ribeiro Jr, Ernani}, title={Bounds on volume growth of geodesic balls for Einstein warped products}, journal={Proc. Amer. Math. Soc.}, volume={143}, date={2015}, pages={4415--4422}, review={\MR{3373940 }}}

\bib{BRR2019}{article}{author={Batista, Rondinelle}, author={Ranieri, Marcos}, author={Ribeiro Jr, Ernani}, title={Remarks on complete noncompact Einstein warped products}, journal={to appear in Commun. Anal. Geom.}, date={2018}, pages={}}

\bib{caoALM11}{article}{author={Cao, Huai-Dong}, title={Recent progress on Ricci solitons}, journal={Recent advances in geometric analysis, Adv. Lect. Math. (ALM)} volume={11}, number={2}, date={2010}, pages={1--38}, review={\MR{2648937}}}

\bib{Cao}{article}{author={Cao, Huai-Dong}, author={Zhou, Detang}, title={On complete gradient shrinking Ricci solitons}, journal={J. Differential Geom.}, volume={85}, date={2010}, number={2}, pages={175--185}, issn={0022-040X}, review={\MR{2732975}}}
   
   \bib{CaseShuWey}{article}{author={Case, Jeffrey}, author={Shu, Yu-Jen}, author={Wei, Guofang}, title={Rigidity of quasi-Einstein metrics}, journal={Diff. Geom. Appl.}, volume={29}, number={1}, date={2011}, pages={93--100}, review={\MR{2784291}}}

\bib{Case}{article}{author={Case, Jeffrey}, title={The nonexistence of quasi-Einstein metrics}, journal={Pacific J. Math.}, volume={248}, date={2010}, pages={227--284}, review={\MR{2741248}}}

\bib{calabi}{article}{author={Calabi, Eugenio}, title={On manifolds with non-negative Ricci curvature II}, journal={Notices Amer. Math. Soc.}, volume={22}, date={1975}, pages={A205}}

 \bib{Chen}{article}{author={Chen, Bing-Long}, title={Strong uniqueness of the Ricci flow}, journal={J. Differential Geom.}, volume={82}, number={2}, date={2009}, pages={363--382}, review={\MR{2520796}}}
 
  \bib{ChZ2013}{article}{author={Cheng, Xu}, author={Zhou, Detang}, title={Volume estimate about self-shrinkers}, journal={Proc.  Amer.  Math. Soc.}, volume={141}, date={2013}, number={2},  pages={687--696}, review={\MR{2996973}}}
  %doi={10.1090/S0002-9939-2012-11922-7}}

\bib{Topping}{article}{author={Enders, Joerg}, author={M\"uller, Reto}, author={Topping, Peter}, title={On Type-I singularities in Ricci flow}, journal={Commun. Anal. Geom.}, volume={19}, number={5}, date={2011}, pages={905--922}, review={\MR{2886712}}}

\bib{Grigo}{article}{author={Grigor'yan, Alexander}, title={On stochastically complete manifolds}, journal={Soviet Math. Dokl.}, volume={34}, number={2}, date={1987}, pages={310--313}}

\bib{Hamilton2}{article}{author={Hamilton, Richard}, title={The formation of singularities in the Ricci flow}, journal={Surveys in Differential Geom. Vol. II (Cambridge, MA, 1993) Int. Press, Cambridge, MA}, date={1995}, pages={7--136}, review={\MR{1375255}}}

\bib{HM}{article}{author={Haslhofer , Robert},  author={M\"uller, Reto}, title={A compactness theorem for c
omplete Ricci shrinkers}, journal={Geom.
Funct. Anal.}, volume={21}, date={2011}, pages={1091--1116}, review={\MR{2846384}}}

\bib{HPW}{article}{author={He, Chenxu}, author={Petersen, Peter}, author={Wylie, William}, title={On the classification of warped product Einstein metrics}, journal={Comm. Anal. Geom.}, volume={20}, number={2}, date={2012}, pages={271--311}, review={\MR{2928714}}}

\bib{KK}{article}{author={Kim, Dong-Soo}, author={Kim, Young Ho}, title={Compact Einstein warped product spaces with nonpositive scalar curvature}, journal={Proc. Amer. Math. Soc.} volume={131}, number={8}, date={2003}, pages={2573--2576}, review={\MR{1974657}}}

\bib{LuePage}{article}{author={L\"u, H.}, author={Page, Don N.}, author={Pope, C. N.}, title={New inhomogeneous Einstein metrics on sphere bundles over Einstein-K\"ahler manifolds}, journal={Phys. Lett. B}, volume={593}, number={1--4}, date={2004}, pages={218-226}, review={\MR{2005f:53063}}}

\bib{Natasa}{article}{author={Munteanu, Ovidiu}, author={Sesum, Natasa}, title={On gradient Ricci soliton}, journal={J. Geom. Anal.}, volume={23}, number={2}, date={2013}, pages={539--561}, review={\MR{3023848}}}

\bib{MW}{article}{author={Munteanu, Ovidiu}, author={Wang, Jiaping}, title={Analysis of weighted Laplacian and applications to Ricci solitons}, journal={Commun. Anal. Geom.}, volume={20}, number={1}, date={2012}, pages={55--94}, review={\MR{2903101}}}

\bib{MW2}{article}{author={Munteanu, Ovidiu}, author={Wang, Jiaping}, title={Geometry of manifolds with densities}, journal={Advances in Math.} volume={259}, date={2014}, pages={269--305}, review={\MR{3197658}}}

\bib{Naber}{article}{author={Naber, Aaron}, title={Noncompact shrinking four solitons with nonnegative curvature}, journal={J. Reine Angew. Math.} volume={645}, date={2010}, pages={125--153}, review={\MR{2673425}}}

\bib{Qian}{article}{author={Qian, Zhongmin}, title={Estimates for weighted volumes and applications}, journal={Quart. J. Math. Oxford Ser. (2)}, volume={48}, number={190}, date={1997}, pages={235--242}, review={\MR{1458581}}}

\bib{Sesum}{article}{author={Sesum, Natasa}, title={Limiting behavior of Ricci flows}, journal={Thesis (Ph.D.), Massachusetts Institute of Technology}, date={2004}}

\bib{Wang}{article}{author={Wang, Lin Feng}, title={On noncompact $\tau$-quasi-Einstein metrics}, journal={Pacific J. Math.}, volume={254}, number={2}, date={2011}, pages={449--464}, review={\MR{2900025}}}

\bib{yau1}{article}{author={Yau, Shing Tung}, title={Some function-theoretic properties of complete Riemannian manifold and their applications to geometry}, journal={Indiana Univ. Math. J.}, volume={25}, number={7}, date={1976}, pages={659--670}, review={\MR{417452}}}

\bib{Zhang}{article}{author={Zhang, Shi Jin}, title={On a sharp volume estimate for gradient Ricci solitons with scalar curvature bounded below}, journal={Acta Mathematica Sinica}, volume={27}, number={5}, date={2011}, pages={871--882}, review={\MR{2786449}}}

\end{biblist}
\end{bibdiv}

\end{document}